\newtheorem{thm}{Theorem}
\newtheorem{defn}[thm]{Definition}
\newenvironment{proof}[1][Proof]{\noindent\textbf{#1.} }{\hfill
  \rule{0.5em}{0.5em} \medskip} 
\newcommand{\R}{\mathbb{R}}
\newcommand{\N}{\mathbb{N}}
\newcommand{\Z}{\mathbb{Z}}
\begin{document}

\title{On Hilbert's 13th Problem\footnote{\noindent {\bf 2000 Mathematics Subject Classification}:
26B40, 54C30; 54C35, 54E45. \vskip .1em {\bf Key Words and Phrases}:
Superposition of functions, finite dimension, locally compact, basic family,
Hilbert's 13th Problem.}
}

\author{Ziqin Feng\footnote{Department
    of Mathematics, University of Pittsburgh, PA 15260, USA} and 
Paul Gartside\footnote{\emph{Corresponding author} Department
    of Mathematics, University of Pittsburgh, Pittsburgh, PA 15260, USA, email:
    gartside@math.pitt.edu.}
}

\date{July 2009}

\maketitle

\begin{abstract} Every continuous function of two or more real variables can be written as the superposition of continuous functions of one real variable along with addition.
\end{abstract}

\section{Introduction}
The 13th Problem from Hilbert's famous list \cite{H} asks whether every continuous function of three variables can be written as a superposition (in other words, composition) of continuous functions of two variables. Hilbert anticipated a negative answer saying, 
\begin{quotation}
``it is probable that the root of the equation of the
seventh degree is a function of its coefficients which [...] cannot be constructed by a finite number of insertions of functions of two arguments. In
order to prove this, the proof would be necessary that the
equation of the seventh degree $f^7 + xf^3 + yf^2 +zf + 1 = 0$ is
not solvable with the help of any continuous functions of only two
arguments.''
\end{quotation}
It took over 50 years  for  significant progress to be made on Hilbert's 13th Problem. Then in 1954 Vitushkin \cite{Vit} found a result in the direction Hilbert expected: if $n/q > n'/q'$ then there are functions of $n$ variables with all $q$th order derivatives continuous which can not be written as a superposition of functions of $n'$ variables and all $q'$th order derivatives continuous. In particular, there are continuously differentiable functions of three variables which can not be written as a superposition of continuously differentiable functions of two variables.

However Kolmogorov and Arnold subsequently proved a series of results culminating  with Kolmogorov's 1957 Superposition Theorem.
\begin{thm}[Kolmogorov Superposition, \cite{Kol}]\label{KST}
For a fixed $n \ge 2$, there are $n(2n+1)$ maps $\psi^{pq} \in
C([0,1])$  such that every map $f \in C([0,1]^n)$ can be
written:
\[ f(\mathbf{x}) = \sum_{q=1}^{2n+1} g_q\left( \phi^q
(\mathbf{x})\right) \qquad \text{where } \phi^q(x_1, \ldots , x_n) = 
\sum_{p=1}^n \psi^{pq} (x_p),\] and the $g_q \in C(\R)$ are maps
depending on $f$. 
\end{thm}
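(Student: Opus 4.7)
The plan is to prove Theorem \ref{KST} in two stages. First isolate a purely geometric condition on $\{\psi^{pq}\}$ — call the family \emph{basic} if the conclusion holds for every $f \in C([0,1]^n)$. Then (i) identify a concrete separation condition on the induced maps $\phi^q$ that suffices for basicness, and (ii) show basic families exist via a Baire category argument in the Polish space $X = C([0,1])^{n(2n+1)}$.

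\textbf{Sufficient condition.} For each $k \in \N$, subdivide $[0,1]$ into $k$ subintervals; by taking $2n+1$ shifted versions of the resulting grid on $[0,1]^n$ and slightly shrinking each cube, arrange that every point of $[0,1]^n$ lies strictly inside some cube in at least $2n$ of the $2n+1$ shifts. Require that for every $k$ and every $q$, $\phi^q$ sends the cubes of the $q$th shift to pairwise disjoint closed intervals of diameter tending to $0$ in $k$. Given this, the key reduction is: for any $f$ with $\|f\|_\infty \le M$ there exist $g_q \in C(\R)$ with $\|g_q\|_\infty \le M/(2n+1) + \epsilon$ such that
\[ \left\| f - \sum_{q=1}^{2n+1} g_q \circ \phi^q \right\|_\infty \le \tfrac{2n}{2n+1}\,M + \epsilon. \]
Here each $g_q$ is defined on the $\phi^q$-images of the $q$th shift's cubes to equal roughly $f/(2n+1)$ averaged over the corresponding cube, and interpolated linearly elsewhere; the point-covering property ensures that at each $\mathbf{x}$ at least $2n$ of the $2n+1$ summands contribute the target value. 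Iterating this reduction on the residual $f - \sum_q g_q \circ \phi^q$ and summing the resulting geometric series produces the representation.

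\textbf{Existence.} In $X$ let $U_k$ be the set of tuples for which the separation condition holds at resolution $k$ with some quantitative slack; each $U_k$ is open. Density follows by perturbation: given any tuple and $\epsilon > 0$, add to each $\psi^{pq}$ a small piecewise-linear function whose breakpoints and slopes are chosen so that the finitely many cube-sum values $\phi^q(\mathbf{c})$ at level $k$ become mutually well-separated for every $q$. Then $\bigcap_k U_k$ is a dense $G_\delta$, hence nonempty, and any member of it is a basic family.

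\textbf{Main obstacle.} The technical heart is the density step: simultaneously perturbing the $n(2n+1)$ functions $\psi^{pq}$ within an $\epsilon$-ball so that all $2n+1$ sums $\phi^q$ separate a fixed finite combinatorial pattern of cubes. This is a genericity argument in a finite-dimensional slice — classically handled by taking the slopes of the perturbations rationally independent so that the values $\sum_p \psi^{pq}(c_p)$ generically avoid collisions across cubes and across $q$. Once this is in place, everything else is the soft iterative estimate above combined with the Baire category theorem.
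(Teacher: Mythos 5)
The paper does not prove Theorem~\ref{KST}; it states it and cites Kolmogorov's 1957 paper, and its own technical work (Theorem~\ref{ElemRm}) is an explicit grid-and-primes construction extending the result to $\R^m$. Your proposal is a sketch of a genuinely different route: the Baire category proof of the Kolmogorov Superposition Theorem, which goes back to Kahane and Hedberg. Both routes share the same engine --- the iterative reduction that shrinks the sup-norm of the residual by a factor $\frac{2n}{2n+1}$ at each step, using a disjointness property of the $\phi^q$-images of grid cubes together with the ``many cubes cover each point'' property --- and this is also exactly the engine of the paper's Step~3 in Theorem~\ref{ElemRm}. Where you differ is how the inner functions are produced: the paper manufactures the $\psi^{pq}$ by hand, keeping the cube-sum values apart with the prime-divisibility trick (condition~(5)), whereas you get them non-constructively as a member of a dense $G_\delta$ in $C([0,1])^{n(2n+1)}$, with the separation enforced generically by perturbing slopes to be rationally independent. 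The Baire argument is cleaner and shorter for the compact cube, while the explicit construction scales to the non-compact setting $\R^m$ that the paper needs; that is precisely why the paper goes the constructive route.

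One concrete slip you should fix: you claim each point of $[0,1]^n$ can be arranged to lie \emph{strictly inside} a cube in at least $2n$ of the $2n+1$ shifted grids. This is not achievable: a point misses shift $q$ if any one of its $n$ coordinates falls in a gap of shift $q$'s one-dimensional grid, and with gaps pairwise disjoint across shifts (the best one can do), each coordinate can disqualify a different shift, so a point can miss up to $n$ shifts and is only guaranteed to be interior-covered by $n+1$ of them. The correct covering statement is ``at least $n+1$ of the $2n+1$'' (matching the paper's ``any $m+1$ of the $2m+1$ families cover''), and this is also exactly what your arithmetic uses: with $n+1$ good indices contributing $\approx f(\mathbf{x})/(2n+1)$ and $n$ bad ones bounded by $M/(2n+1)$, the residual is at most $\tfrac{n}{2n+1}M + \tfrac{n}{2n+1}M = \tfrac{2n}{2n+1}M$, which is the bound you wrote. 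So the error is localized to the geometry statement and does not propagate, but as written the covering claim is false.
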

This remarkable theorem gives a very strong positive solution to Hilbert's 13th Problem, indeed it says that every continuous function of two or more variables can be written as a superposition of continuous functions of just one variable along with just one function of two variables, namely addition.

However the Kolmogorov Superposition Theorem is not a complete solution to Hilbert's~13th Problem. Hilbert's statement of the problem explicitly refers to functions (such as the root function of an equation of the seventh degree) of three {\bf real}, or perhaps even more naturally, complex, variables.
But the Kolmogorov Superposition Theorem only deals with functions on a compact cube --- the variables are restricted to a closed and bounded interval.

There have been numerous extensions to the Kolmogorov Superposition Theorem. Most notably Ostrand \cite{Ost} showed that compact, finite dimensional metrizable spaces satisfy a superposition theorem, while Fridman \cite{Frid} showed that the inner functions (the $\psi^{pq}$) can be taken to be Lipschitz. However none  solve Hilbert's 13th Problem for continuous functions of three real variables.

In this paper we complete the solution of Hilbert's 13th Problem by showing that the Kolmogorov Superposition Theorem holds for all continuous functions $f : \R^m \to \R$ (Theorem~\ref{ElemRm}). Further, using earlier work of the authors, \cite{FG1}, we characterize  the topological spaces satisfying a superposition result of the Kolmogorov type. It turns out these spaces  are precisely the locally compact, finite dimensional separable metrizable spaces, or equivalently, those spaces homeomorphic to a closed subspace of Euclidean space (Theorem~\ref{char}).

\section{Superpositions}

Write $C(X,Y)$ for all continuous maps from a space $X$ to another space $Y$, and $C(X)$ for $C(X,\R)$.
Note that we always use the max norm. $\| \cdot \}_\infty$, on $\R^m$.

Abstracting from Theorem~\ref{KST} we make the following definition:
\begin{defn} Let $X$ be a topological space. A family
  $\Phi   \subseteq   C(X)$ is said to be
  basic for $X$ if each  $f \in C(X)$
can be written: \ $f=\sum_{q=1}^{n} \left(g_q \circ \phi_q\right)$,

for some  $\phi_1, \cdots , \phi_n$ in $\Phi$ and
  `co-ordinate functions' $g_1, \ldots , g_n \in C(\R)$.
\end{defn}
Note that the Kolmogorove Superposition Theorem says that every cube $[0,1]^m$ has a finite basic family in which each element of the basic family is a sum of functions of one variable.

\begin{thm}\label{ElemRm} Fix $m$ in $\N$. There exist $\psi^{pq}\in C(\R)$, for $q=1,2, \ldots, 2m+1$ and $p=1,2,\ldots, m$, such that for any function $f\in
C(\R^m)$,  there can be found functions $g_1, \ldots, g_{2m+1}$ in $C(\R)$  such that:
\[f(\mathbf{x})=\sum_{q=1}^{2m+1} g_q(\phi^q(\mathbf{x})), \quad \text{where } \phi^q(x_1, \ldots , x_m)=\psi^{1q}(x_1)+\cdots+\psi^{mq}(x_m).\]
\end{thm}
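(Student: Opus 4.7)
The plan is to extend Kolmogorov's theorem (Theorem~\ref{KST}) from the compact cube to the non-compact space $\R^m$ by exhausting $\R^m$ with the nested compact cubes $C_N = [-N,N]^m$ and running a two-stage construction: first, build a \emph{single} family of inner functions $\{\psi^{pq}\}_{p \le m,\, q \le 2m+1}$ on $\R$ whose associated sums $\phi^q(x) = \sum_{p=1}^{m} \psi^{pq}(x_p)$ behave like a Kolmogorov basic family on every $C_N$ simultaneously; and second, given a target $f \in C(\R^m)$, produce the outer functions $g_q \in C(\R)$ by an inductive, compact-exhaustion scheme whose pointwise limit is continuous.

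For the first stage, I would construct each $\psi^{pq}$ inductively on the annular intervals $[-(N+1),-N]\cup[N,N+1]$, invoking Theorem~\ref{KST} (appropriately translated and rescaled) to control the behaviour of the $\phi^q$'s on the new annular shell $A_{N+1}=C_{N+1}\setminus\mathrm{int}\,C_N$. The crucial design feature I would enforce is a \emph{separation of images}: the $\psi^{pq}$ on $[-(N+1),-N]\cup[N,N+1]$ are chosen so that for each $q$ the image $\phi^q(A_{N+1})$ lies in an interval of $\R$ disjoint from $\phi^q(C_{N-1})$. The fact that we have $2m+1 > 2\cdot\dim\R^m$ functions is exactly the slack needed: at each stage, a subset of the $q$'s can be "reserved" to re-encode the new annular data on fresh portions of $\R$ without clobbering the $\phi^q$-images of previously exhausted cubes.

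For the second stage, fix $f\in C(\R^m)$ and build $g_q$ as a limit of partial outer functions $g_q^{(N)}\in C(\R)$ with $\sum_{q=1}^{2m+1}g_q^{(N)}\circ\phi^q=f$ on $C_N$ and $g_q^{(N+1)}=g_q^{(N)}$ on $\phi^q(C_{N-1})$. The step from $N$ to $N+1$ reduces to solving a Kolmogorov-type superposition problem on the compact annulus $A_{N+1}$ for the error function $f-\sum_q g_q^{(N)}\circ\phi^q$, and then extending the resulting corrections continuously into the "reserved" regions provided by stage one. Pointwise stabilization of $g_q^{(N)}$ on every compact subset of $\R$ and continuity of $\phi^q$ deliver a well-defined $g_q\in C(\R)$, and a direct check against the defining identity on each $C_N$ finishes the argument.

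The main obstacle is the geometric bookkeeping in the first stage: one must simultaneously guarantee (i) that $\{\phi^q|_{C_N}\}$ is basic for $C_N$ for every $N$, (ii) that the annular-image separation property holds uniformly in $N$, and (iii) that each $\psi^{pq}$ remains continuous across the countably many pieces $[N,N+1]$ and $[-(N+1),-N]$ on which it has been patched. Balancing these three constraints against the count $2m+1$—and showing that the inductive construction does not deplete the available "reserved" index set—is the delicate technical point where Ostrand's extension of Kolmogorov's theorem (applied to the compact shells $A_N$) and the dimension-theoretic flexibility of $\R^m$ are most heavily used.
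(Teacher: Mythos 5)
Your plan is a patching/reduction strategy: treat Kolmogorov's theorem (or Ostrand's) as a black box on compact shells, glue the inner functions together annulus-by-annulus while enforcing an image-separation property, and then build the outer functions by a stabilizing inductive scheme. This is a genuinely different route from the paper, which does \emph{not} invoke Theorem~\ref{KST} at all; instead, it re-runs a Kolmogorov-style construction from scratch on all of $\R$ (grids $\mathcal{S}_k^q$ of open intervals, approximating step functions $f_k^{pq}$ with values that are distinct multiples of $1/P_k^{pq}$ for carefully chosen primes, condition~(7) guaranteeing uniform convergence to $\psi^{pq}$), proves directly that the $\phi^q$ are basic for compactly supported functions via a geometric error-reduction argument, and then treats general $f$ with a partition of unity $\{\alpha_n\}$ subordinate to shells $K_n$ whose $\phi^q$-images form a \emph{locally finite} family in $\R$, so that $g_q = \sum_i g_q^i$ is a locally finite sum and hence continuous.

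There is, however, a genuine gap in your stage one. You propose to define $\psi^{pq}$ on the new intervals $[N,N+1]\cup[-(N+1),-N]$ and thereby "control the behaviour of the $\phi^q$'s on the new annular shell $A_{N+1}$." But a point $\mathbf{x}\in A_{N+1}$ typically has only one coordinate in the new intervals; its remaining coordinates lie in $[-N,N]$, where $\psi^{pq}$ is already fixed. So $\phi^q|_{A_{N+1}}$ is \emph{not} freely prescribable by the new pieces of $\psi^{pq}$; it is constrained by the earlier construction on $[-N,N]$. Consequently you cannot simply "invoke Theorem~\ref{KST} (appropriately translated and rescaled)" on the shell: Kolmogorov's theorem delivers a complete set of inner functions for a cube, not inner functions with prescribed values on a prespecified sub-block. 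To make your strategy rigorous you would need a relative version of the Kolmogorov construction — one that extends a given Kolmogorov family from a smaller cube to a larger one while preserving basicity — and that is essentially the content of the paper's grid-refinement argument (especially conditions~(2),~(5), and~(7) and the prime-separation trick ensuring $\mathcal{U}_k^q$ is discrete). In other words, the "delicate technical point" you flag at the end is not a loose end to be tidied but is the entire proof; KST as a black box does not supply it.
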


\begin{proof} We break the proof into four parts. In the first step we define a family of `grids', and approximations to the functions $\psi^{pq}$. Next we define the $\psi^{pq}$ and $\phi^q$, and establish certain useful properties of the grids and functions. In the final two steps  we show that the functions $\phi^q$ are basic for $\R^m$, first for compactly supported functions, and then in general.

\paragraph{1. Construction of the Grids and Approximations}

We establish by induction on $k$, the existence for each $k \in \N$,  $p=1,2,\ldots,m$, and $q=1,2,\ldots, 2m+1$, of positive $\epsilon_{k}$, $\gamma_k<1/10$, 
distinct positive prime numbers $P_k^{p q}>m+10$, discrete
families (`grids') $\mathcal{S}_k^q$ of open intervals of $\R$ and continuous functions $f_k^{p q}: \R\rightarrow \R$ such that:

\begin{itemize}
  \item[(1)]  the sequences of $\epsilon_{k}$'s and $\gamma_k$'s both strictly decrease to zero (in fact, for all $k$, $0< \epsilon_{k+1} < \epsilon_k /6$ and $0 < \gamma_k < 1/k$),
  \item[(2)]  each member of $\mathcal{S}_k^q$ has diameter $\leq \gamma_k$,

  for each fixed $k$ any two of the families $\{\mathcal{S}_k^q: q=1,\ldots,
  2m+1\}$ cover $[-k,k]$, and all cover $\{-k, 0, k\}$;

  \item[(3)]  $m\epsilon_k<1/\prod_{p=1}^m P_k^{p q}$ for
  each $q=1, 2, \ldots, 2m+1$;
  \item[(4)]  $ f_k^{p q}$ is non--decreasing on $\R^{+}$, non--increasing on $\R^{-}$ and constant outside $[-k,k]$;

   \item[(5)]  $ f_k^{p q}$  is  constant on each
  member of $\mathcal{S}^{q}_k$ with value a positive integral multiple of
  $1/P_k^{p  q}$, and $(f_k^{pq}(J_1)-f_k^{pq}(J_2)) P_k^{pq} \mod P_k^{pq}\neq 0$ given $J_1, J_2\in  \mathcal{S}_k^{pq}$;

additionally, if $J$ is an interval containing $0$, then  $f_k^{pq}$ maps $J$ to $0$;

  \item[(6)] $|f_k^{pq}(k)-k|<1/(m+1)$ and $|f_k^{pq}(-k)-k|<1/(m+1)$;

  \item[(7)] for each $\ell \le j<k$ and $x\in[-\ell, \ell]$, $f_{j}^{p q}(x)\leq
  f_{k}^{pq}(x)\leq f_{j}^{p q}(x)+\epsilon_j-\epsilon_k$.
\end{itemize}

\begin{description}
\item[Base Step:]

  It is straightforward to find discrete collections of open intervals $\mathcal{S}_1^{pq}$ for $p=1,  \ldots, m$ and  $q=1, \ldots, 2m+1$
 such that any two of the families $\{\mathcal{S}_1^{pq}: q=1,2,\cdots,
 2m+1\}$ cover $[-1,1]$,  each of the families covers $\{1,0,
 -1\}$, and each interval in the collection has length $\leq
 \gamma_1=1/10$.

  Let $n_1$ be the number of all the open interval in all the
 collections $\mathcal{S}_1^{pq}$ ($1 \le p \le m$,  $1 \le q \le 2m+1$).
For $p=1, \ldots , m$ and $q=1, \ldots , 2m+1$ pick distinct primes $P_1^{pq}$ larger than $n_1$. 

Now we define $f_1^{pq}$ on $[-1,1]$. Then for $x>1$ define $f_1^{pq}(x)=f_1^{pq}(1)$, and for $x< -1$ define $f_1^{pq}(x)=f_1^{pq}(-1)$.

 If $J\in \mathcal{S}_1^{pq}$, then define $f_1^{pq}$ such that $f_1^{pq}$ restricted to $J$ is a positive
 integral multiple of $ 1/P_1^{pq}$. More specifically, if $0\in J$ then $f_1^{pq}(J)=0$;  if $1\in J$ then $f_1^{pq}(J)=1-1/P_1^{pq}$; and if  $1\in J$ then  $f_1^{pq}(J)=1-2/P_1^{pq}$. This can easily be done so that $f_1^{pq}$ (as defined so far) is non--decreasing on $[0,1]$ and non--increasing on $[-1,0]$.

For $x$  in $[-1,1] \setminus \bigcup \mathcal{S}_1^{pq}$,  interpolate $f_1^{pq}$ linearly.

Choose $\epsilon_1>0$ such that $m\epsilon_1<1/\prod_{p=1}^m P_1^{p q}$ for
  each $q=1, 2, \cdots, 2m+1$.

All (applicable) conditions (1)--(7) hold.

\item[Inductive Step:] Suppose $P_{k-1}^{p q}$, $\epsilon_{k-1}$, $\gamma_{k-1}$, $\mathcal{S}_{k-1}^q$ and
$f_{k-1}^{p q}$ are all given and satisfy the requirements
(1)--(7).

By uniform continuity of  $f_{k-1}^{pq}$ on $[-(k-1),k-1]$, there exists
$\gamma_k<\min\{1/k,\gamma_{k-1}\}$ such that
$|f_{k-1}^{pq}(x_1)-f_{k-1}^{pq}(x_2)|<\epsilon_{k-1}/6$ if
$|x_1-x_2|<\gamma_k$ for each $p=1,\ldots , m$ and $q=1,\ldots, 2m+1$.

Then it is straightforward to find discrete collections of open intervals,  
$\mathcal{S}_k^{pq}$ for  $1 \le p \le m$ and  $1 \le q \le 2m+1$,
 such that any two of the families $\{\mathcal{S}_k^{pq}: q=1,2,\cdots,
 2m+1\}$ cover $[-k,k]$, each of the families covers $\{k,0,
 -k\}$, each interval in the collection has length $\leq
 \gamma_{k}$ and the distance between each pair of adjacent intervals is also $\leq
 \gamma_{k}$.

Let $n_k$ be the total number of  open intervals in all the
 collections $\mathcal{S}_k^{pq}$ for $ p=1, 2, \ldots, m$  and $q=1, 2, \ldots,
 2m+1$.
 For each $p, q$ select distinct primes $P_k^{pq}$ so that  $2n_k/P_k^{pq}<\epsilon_{k-1}/6$.

Next, we give the construction of $f_k^{pq}$ on $[-k,k]$. Outside of
$[-k,k]$ extend constantly (as in the Base Step).

\begin{itemize}
\item If $J\in \mathcal{S}_k^{pq}$, then $f_k^{pq}(J)$ is a positive
 integral multiple of $ 1/P_k^{pq}$. For any $ J\in
 \mathcal{S}_k^{pq}$ with $J\cap [-(k-1),k-1]\neq \emptyset$, we can ensure that 
  $f_{k-1}^{pq}(x)<f_k^{pq}(x)<f_{k-1}^{pq}(x)+\epsilon_{k-1}/3$.
  \subitem[i] Since $2n_k/P_k^{pq}<\epsilon_{k-1}/6$ and $|f_{k-1}^{pq}(x_1)-f_{k-1}^{pq}(x_2)|<\epsilon_{k-1}/6$
  when
$|x_1-x_2|<\gamma_k$, there are $2n_k$ possible choices for
the value of $f_k^{pq}(J)$ ($J\in\mathcal{S}_k^{pq}$) which makes
$f_{k-1}^{pq}(x)<f_k^{pq}(x)<f_{k-1}^{pq}(x)+\epsilon_{k-1}/3$ for
$x\in J\cap [-(k-1),k-1]$. As there are many fewer than $2n_k$ elements in
$\mathcal{S}_k^{pq}$, we can  select
the $f_k^{pq}(J)$'s such that
   $(f_k^{pq}(J_1)-f_k^{pq}(J_2))P_k^{pq}\mod P_k^{pq}\neq 0$ for any $J_1, J_2\in
  \mathcal{S}_k^{pq}$.
 \subitem[ii] More specifically, if $0\in J$ then $f_k^{pq}(J)=0$, if  $k\in J$ then $f_k^{pq}(J)=1-1/P_k^{pq}$, and if $-k\in J$ then $f_k^{pq}(J)=1-2/P_k^{pq}$. This can easily be done to make $f_k^{pq}$ (as defined so far) non--decreasing on $[0,k]$ and non-increasing on $[-k,0]$.
 \item If $x\notin \bigcup \mathcal{S}_k^{pq} $, let $J_L$ and $J_R$ be the adjacent intervals in
 $\mathcal{S}_k^{pq}$ such that $x$ lies between them. Let $x_L$ be the
 right endpoint of $J_L$ and $x_R$ be the left end point of $J_R$
 Then $f_k^{pq}$ maps $[x_L, x_R]$ linearly to
 $[f_{k-1}^{pq}(J_L),f_{k-1}^{pq}(J_R)]$. Since $|x_L-x_R|<\gamma_k$,
 $|f_{k-1}^{pq}(x_L)-f_{k-1}^{pq}(x_R)|<\epsilon_{k-1}/6$, therefore,
  $f_k^{pq}(x)-f_{k-1}^{pq}(x)<\epsilon_{k-1}/3+\epsilon_{k-1}/6=\epsilon_{k-1}/2$.
\end{itemize}
Choose $\epsilon_k$ such that
$m\epsilon_k<\min\{1/\prod_{p=1}^m P_k^{p q},\epsilon_{k-1}/6\}$ for
  all $1 \le q \le 2m+1$.

All  requirements (1)--(7) are  satisfied.

\end{description}

\paragraph{2. Definition and Useful Properties of the  Functions, $\psi^{pq}$ and $\phi^q$}

For $x\in \R$, let $\psi^{p q}(x)=\text{lim}_{k\rightarrow \infty}
f_{k}^{p q}(x)$. Now for a fixed $n\in \N$, and any $x\in [-n,n]$,
$f_{k}^{p q}(x)\leq \psi^{p q}(x) \leq f_{k}^{p q}(x)+\epsilon_k$ for $k>n+1$.
So $\psi^{p q}$ restricted to $[-n,n]$, being the uniform limit of the 
$f_k^{p q}$ for $k > n+1$, is continuous on $[-n,n]$. Therefore, $\psi^{p q}$ is
continuous on $\R$.

Also, by construction, the image of $[n,n+1]$ under $\psi^{p q}$ is a subset of
$[|n|-1/(m+1), |n|+1+1/(m+1)]$ for each $n\in \Z$.

\smallskip

Let $\phi^{q}(x_1,\ldots,x_m)=\psi^{1q}(x_1)+\cdots+\psi^{mq}(x_m)$
for $(x_1,x_2,\ldots,x_m)\in \R^m$.

\smallskip

Our eventual goal is to  show
$\{\phi^q: q=1,2,\ldots, 2m+1\}$ is a basic family of $\R^m$, however first, we establish some useful  properties of the grids and functions.

For each $q$ and $k$, let $\mathcal{J}_k^q=\{C_1\times
C_2\times\cdots\times C_m: C_{p}\in \mathcal{S}_k^{q} \text{ for each }
p=1,2,\ldots, m\}$. Then we can say the following about
$\mathcal{J}_k^q$.
\begin{itemize} \item For a fized $q$ and $k$,
$\mathcal{J}_k^q$
 is a discrete collection.

 \item For a fixed $k$, any element in $\R^m$ belongs to at least
 $m+1$ rectangles of $\mathcal{J}_k^q$, i.e. any $m+1$ of $\{\mathcal{J}_k^q: q=1,\ldots, 2m+1\}$ form an open cover of $\R^m$.
\end{itemize}

 Let $\mathcal{U}_{k}^q=\{\phi^q(C):C\in\mathcal{J}_k^q\}$.
 Take $C=C_1\times C_2\times  \cdots\times C_m\in \mathcal{J}_k^q$, then
 $\phi^q(C)$ is contained in the interval $[\sum_{p=1}^m f_k^{pq}(C_p),
 \sum_{p=1}^m f_k^{pq}(C_p)+m\epsilon_k]$.
 By  condition~(3) in the construction of the $f_k^{p q}$,
 these closed intervals are disjoint for each $q$ and $k$.
 Therefore,
\begin{description}
\item[Claim] $\mathcal{U}_{k}^q$ is a discrete collection of subsets of
 $\R$ for each $q$ and $k$.
\end{description}




\paragraph{3. The  $\phi^q$ are Basic for Compactly Supported Functions}
We now prove:

\begin{description}
\item[Claim]  For any compactly supported  $h\in
C(\R^m)$, there are  $g_1, \ldots , g_{2m+1}$ in $C(\R)$ 
such that $h=\sum_{q=1}^{2m+1} g_q \circ \phi_q$.
\end{description}

Fix a compactly supported $h \in C(\R^m)$. Choose $\ell$ in $\N$ so  that
 $h(\mathbf{x})=0$ for any $\mathbf{x}$ outside $K=[-\ell-1, \ell+1]^m$.

For each integer $r\geq 0$ and $q=1,\cdots, 2m+1$, find
positive $k_r$ and continuous functions $\chi_r^q:\R\rightarrow \R$
($k_0=\ell$ and $\chi_1^q=0$ for each $q$) such that if
$h^r(\mathbf{x})=\sum_{q=1}^{2m+1}\sum_{s=0}^r\chi_s^q(\phi^q(\mathbf{x}))$
 and $M_r=\text{sup}_{\mathbf{x}\in\R^m}|(h_i-h^r_i)(\mathbf{x})|$, then:
\begin{itemize}
\item[(1)] $k_{r+1}>k_r$;
\item[(2)] if $\|\mathbf{a}-\mathbf{b}\|_\infty <m/10^{k_{r+1}}$, then
$|(h-h_r)(\mathbf{a})-(h-h_r)(\mathbf{b})|<(2m+2)^{-1}M_r$ for
$\mathbf{a},\mathbf{b}\in \R^m$;
\item[(3)] $\chi_{r+1}^q$ is constant on each member of
$\mathcal{U}_{k_{r+1}}^q$;
\item[(4)] if $C\cap (\R^m\setminus K)\neq \emptyset$ for $C\in \mathcal{J}_{k_{r+1}}^q$, then the value of
$\chi_{r+1}^q$ on $\phi_q(C)$ is $0$,

otherwise, its value on $\phi_q(C)$ is $(m+1)^{-1}(h-h_r)(\mathbf{y})$
for some arbitrarily chosen element $\mathbf{y}\in C$; and

\item[(5)] $\chi_{r+1}^q(x)\leq (m+1)^{-1}M_r$ for each $x\in \R$.
\end{itemize}

The $k_r$ and $\chi_r^q$ are defined inductively on $r$. Also for
any $\mathbf{a},\mathbf{b}\in C\in \mathcal{J}_{k_{r+1}}^q$,
$\|\mathbf{a}-\mathbf{b}\|_\infty <m/10^{k_{r+1}}$.
Therefore:
\begin{itemize}
\item[(6)]  for $\mathbf{x}\in\bigcup\{ C: C\in
\mathcal{J}_{k_{r+1}}^q\}$,

$|(m+1)^{-1}(h-h_r)(\mathbf{x})-\chi_{r+1}^q(\phi_q(\mathbf{x}))|<(m+1)^{-1}(2m+2)^{-1}M_r$.
\end{itemize}

Also for each $\mathbf{x}\in \R^m$, there are at least $m+1$ distinct
values of $q$ such that $\mathbf{x}\in \bigcup\{C:C\in
\mathcal{J}_{k_{r+1}}^q\}$. Then there are $m+1$ values of $q$ such
that (6) is true; for the other $m$ values of $q$, (5) in the
construction holds.

Hence, for $\mathbf{x}\in K$,
\begin{eqnarray*}
|(h-h_{r+1})(\mathbf{x})| &= & |(h-h_r)(\mathbf{x})-\sum_{q=1}^{2m+1}\chi_{r+1}^q(\phi_q(\mathbf{x}))| \\
 &< & (m+1)\cdot (m+1)^{-1}(2m+2)^{-1}M_r +m\cdot (m+1)^{-1} M_r \\
 &= & \frac{2m+1}{2m+2} M_r.
\end{eqnarray*}

While for $\mathbf{x}\notin K$,
$\sum_{q=1}^{2m+1}\chi_{r+1}^q(\phi_q(\mathbf{x}))=0$ by property (4).

Therefore, $M_{r+1}<(2m+1)\cdot (2m+2)^{-1}\cdot M_r$, so
$M_r<((2m+1)\cdot (2m+2)^{-1})^r\cdot M_0$ for each $r$, hence
$\lim_{r\rightarrow \infty} M_r=0$, and thus
$h(\mathbf{x})=\text{lim}_{r\rightarrow\infty}h_r(\mathbf{x})$ for all
$\mathbf{x}\in \R^m$.

Moreover, by condition (5), the functions $\sum_{s=0}^r\chi_s^q$
converge uniformly for each $q$ to a continuous function
$g_q:\R\rightarrow \R$ and
$$h(\mathbf{x})=\text{lim}_{r\rightarrow \infty} h_r(\mathbf{x})=\text{lim}\sum_{q=1}^{2m+1}
\sum_{s=0}^r\chi_s^q(\phi_q(\mathbf{x}))=\sum_{q=1}^{2m+1}g_q(\phi_q(\mathbf{x})).$$

This complete the proof of the Claim.

\paragraph{4. The $\phi^q$ are Basic for All Functions}
We complete the proof by showing:
\begin{description}
\item[Claim]  For any   $f\in
C(\R^m)$, there are  $g_1, \ldots , g_{2m+1}$ in $C(\R)$ 
such that $f=\sum_{q=1}^{2m+1} g_q \circ \phi_q$.
\end{description}

First some preliminary definitions.
Let $K_n^i$ be 
\[ \hfill \{(x_1,x_2,\cdots, x_m):x_i\in [-n-2,-n]\cup [n,n+2],\ x_j\in [-n-2,n+2] \text{ for } j\neq
 i\},\]
and let  $\mathcal{K}=
 \{K_n=\bigcup_{i=1}^m K_n^i: n\in \N\cup\{0\}\}$.

 For each $n$, the image of $K_n$ under $\phi^q$ is
 $\{[n-1,m(n+2)+1
 ]: n\in\N\cup\{0\}\}$ which is
 a locally finite collection of subsets of $\R$.

\medskip

Next we inductively define a sequence of continuous functions $\alpha_n$ on $\R^m$ for $n\in \N\cup\{0\}$, as follows:
\begin{description}
  \item[Base step:] $\alpha_0(\mathbf{x})=1$ for $\mathbf{x}\in
  [-1,1]^m$, $\alpha_0(\mathbf{x})=0$ for $\mathbf{x} \in \R^m\setminus K_0$.
  \item[Inductive step:] $\alpha_n(\mathbf{x})=1-\alpha_{n-1}(\mathbf{x})$ for $\mathbf{x} \in K_n\cap
  K_{n-1}$, $\alpha_n(\mathbf{x})=0$ for $\mathbf{x}\in \R^m\setminus K_n$.
\end{description}

\medskip

To prove the Claim, take  any $f\in C(\R^m)$. Then
$f(\mathbf{x})=\sum_{i=0}^{\infty}\alpha_i(\mathbf{x})\cdot
f(\mathbf{x})$. Also $\alpha_i(\mathbf{x})\cdot
f(\mathbf{x})=0$ if $\mathbf{x} \notin K_i$.


From the Claim in the previous Step, for each $i\in \N\cup\{0\}$, there exist
continuous functions $g_1^i, \ldots , q_{2m+1}^i$ such that
$\alpha_i(\mathbf{x})\cdot f(\mathbf{x})=\sum_{q=1}^{2m+1}
g^i_q(\phi^q(\mathbf{x}))$.

Then let $g_q=\sum_{i=0}^{\infty}g^i_q$. This function is
well-defined and continuous because   $\{x:g^i_q(x)\neq 0\}\subseteq
[i-1,m(i+2)+1 ]$, which means there are only finitely many $i$ with 
$g^{i}_q(x)\neq 0$ for  each $x\in \R$.

 Then we have 
\[f(\mathbf{x})=\sum_{i=0}^{\infty}\alpha_i(\mathbf{x})\cdot
f(\mathbf{x})=\sum_{i=0}^{\infty}\sum_{q=1}^{2m+1}
g_q^i(\phi^q(\mathbf{x}))=\sum_{q=1}^{2m+1}
g_q(\phi^q(\mathbf{x})),\]
--- as claimed.
\end{proof}

\begin{thm}\label{char}
Let $X$ be a Tychonoff space. Then the following are equivalent:
\begin{itemize}
\item[(1)] some power of $X$ has a finite basic family;
\item[(2)] for every $m, n \in \N$, there is an $r \in \N$ and $\psi^{p q}$ from $C(X,\R^n)$, for $q=1,\ldots, r$ and $p=1, \ldots , m$, such that every $f \in C(X^m, \R^n)$ can be written 
\[f(x_1, \ldots , x_m) = \sum_{q=1}^r g_q \left( \sum_{p=1}^m \psi^{pq} (x_p) \right),\]
for some $g_1, \ldots , g_r$ in $C(\R^n,\R^n)$;
\item[(3)] $X$ is a locally compact, finite dimensional separable metric space, or equivalently, homeomorphic to a closed subspace of Euclidean space.
\end{itemize}
\end{thm}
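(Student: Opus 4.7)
The plan is to prove the cycle $(3) \Rightarrow (2) \Rightarrow (1) \Rightarrow (3)$. The first implication is the main analytic content and leverages Theorem~\ref{ElemRm}; the second is essentially bookkeeping; the third is handled by invoking the structural results on basic families from our earlier paper~\cite{FG1}.

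For $(3) \Rightarrow (2)$: fix a homeomorphism of $X$ onto a closed subspace of some $\R^k$, so that $X^m$ sits as a closed subspace of $\R^{km}$. Given $f \in C(X^m, \R^n)$, extend $f$ coordinate-wise via Tietze to $F \in C(\R^{km}, \R^n)$. Apply Theorem~\ref{ElemRm} in dimension $km$ to each of the $n$ scalar components $F_1, \ldots, F_n$; since the inner functions $\psi^{pq}$ there depend only on the dimension and not on the represented function, the same $\psi^{pq}$ serve all $n$ components simultaneously. Group the $km$ scalar coordinates into $m$ blocks of size $k$ and, for $j = 1, \ldots, m$ and each $q$, define $\tilde\psi^{jq} : X \to \R^n$ to be the function placing $\sum_{i=1}^{k} \psi^{(j-1)k + i,\, q}(x^i)$ in the first coordinate and zero elsewhere, where $x = (x^1, \ldots, x^k) \in X \subseteq \R^k$. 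Bundle the $n$ scalar outer functions for each $q$ into a single $g_q : \R^n \to \R^n$ that reads only its first coordinate. The resulting identity holds on $\R^{km}$ and restricts to $X^m$, yielding (2).

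For $(2) \Rightarrow (1)$: specialize to $m = n = 1$. The conclusion becomes the existence of $\psi^{11}, \ldots, \psi^{1r} \in C(X)$ such that every $f \in C(X)$ admits $f = \sum_{q=1}^r g_q \circ \psi^{1q}$, i.e., $\{\psi^{11}, \ldots, \psi^{1r}\}$ is a finite basic family for $X$, so $X$ itself witnesses~(1).

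For $(1) \Rightarrow (3)$: a finite basic family $\Phi = \{\phi_1, \ldots, \phi_r\}$ on a Tychonoff space $Y$ induces a continuous injection $e_\Phi : Y \to \R^r$ (every $f \in C(Y)$ factors through $e_\Phi$, and Tychonoff spaces are separated by $C(Y)$). The careful analysis of this map carried out in~\cite{FG1} shows that $Y$ must in fact be locally compact, finite dimensional, and separable metrizable. Applied to $Y = X^n$ (the power hypothesized in (1) to carry a finite basic family), this yields the three properties for $X^n$, and they descend to $X$ via the closed embedding $X \cong X \times \{p\}^{n-1} \hookrightarrow X^n$. That a locally compact, finite dimensional, separable metric space is homeomorphic to a closed subspace of some Euclidean space follows from the Menger--N\"obeling embedding theorem applied to the one-point compactification. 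The main obstacle is this implication: deducing topological regularity of $X$ from an abstract representation property of $C(X)$ is the substantive work, and is the contribution of~\cite{FG1}, which we invoke here as a black box.
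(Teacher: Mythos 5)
Your proof follows the same cycle $(3)\Rightarrow(2)\Rightarrow(1)\Rightarrow(3)$ as the paper: close the embedding $X\hookrightarrow\R^k$, extend $f$ by Tietze, apply Theorem~\ref{ElemRm} in dimension $km$, regroup the inner functions into $m$ blocks of size $k$, and delegate $(1)\Rightarrow(3)$ to \cite{FG1}. You in fact fill in two details the paper elides --- the coordinatewise packaging when $n>1$, and the passage from ``$X^n$ is locally compact, finite-dimensional, separable metrizable'' to the same for $X$ via the closed embedding $X\cong X\times\{p\}^{n-1}$ --- and your block size $k$ for $\Psi^{pq}$ is the correct one (the paper's displayed sum has a typo using $m$ in place of $\ell$).
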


\begin{proof} It was shown in \cite{FG1} that a Tychonoff space has a finite basic family if and only if it is a locally compact, finite dimensional separable metrizable space. Hence (1) implies (3), and (2) implies (1).

Now suppose (3) holds and $X$ is a locally compact, finite dimensional separable metric space. Fix $m$. Then $X$ is (homeomorphic to) a closed subspace of some $\R^{\ell}$. We establish (2)  when $n=1$. The general case follows easily by working co--ordinatewise.

According to Theorem~\ref{ElemRm} there exist $\psi^{pq}$ for 
$p=1,2,\ldots, \ell m$ and $q=1,2, \ldots, 2\ell m+1$ such that any $f\in
C(\R^{\ell m})$ can be written as $f(x_1,\ldots,x_{\ell m})=\sum_{q=1}^{2 \ell m+1}g_q(\sum_{p=1}^{ \ell m}\psi^{pq}(x_p))$ for some  $g_q\in C(\R)$.

Let $r=2 \ell m +1$. Let $\Psi^{pq}=\sum_{i=1+(p-1)m}^{m+(p-1)m} \psi^{iq}$ for
$p=1 , \ldots , m$ and $q=1,\ldots, r$. Since $X$ is a closed subset of $\R^\ell$, any continuous function on $X$ can be continuously extended to $\R^\ell$. Then
$\{\Psi^{pq}\restriction {X}: p=1,\ldots, m,$ and $q=1, \ldots, r\}$ are as required.
\end{proof}

Note that from Theorem~\ref{char}(2) it follows that every continuous function of three complex variables can be written as a superposition of addition and continuous functions of one complex varaiable.

\end{document}